
\documentclass[letterpaper]{article}  
\usepackage[left=1in, right=1in, bottom=.8in, top=0.8in, headsep=0in, footskip=.2in]{geometry}




\usepackage{amsmath}
\usepackage{amsthm}
\usepackage{mathtools}
\usepackage{amssymb}

\usepackage{algorithm}
\usepackage{algorithmic}
\usepackage{caption}
\usepackage{subcaption}
\usepackage{xcolor}

\newtheorem{theorem}{Theorem}
\newtheorem{proposition}{Proposition}
\newtheorem{definition}{Definition}

\newtheorem{remark}{Remark}
\newcommand{\R}{\mathbb{R}}
\newcommand{\x}{\textbf{x}}




\title{\LARGE \bf
Harmonic Control Lyapunov Barrier Functions for Constrained Optimal Control with Reach-Avoid Specifications
}

\author{Amartya Mukherjee, Ruikun Zhou, Haocheng Chang, and Jun Liu
\thanks{$^{1}$Amartya Mukherjee, Ruikun Zhou, Haocheng Chang, and Jun Liu are with the Department of Applied Mathematics, University of Waterloo, Waterloo, Ontario, Canada N2L 3G1 (email: {\tt\small (a29mukhe, ruikun.zhou, h48chang, j.liu)@uwaterloo.ca}).}%
}
\date{}

\begin{document}

\maketitle
\thispagestyle{empty}
\pagestyle{empty}

\begin{abstract}
This paper introduces harmonic control Lyapunov barrier functions (harmonic CLBF) that aid in constrained control problems such as reach-avoid problems. Harmonic CLBFs exploit the maximum principle that harmonic functions satisfy to encode the properties of control Lyapunov barrier functions (CLBFs). As a result, they can be initiated at the start of an experiment rather than trained based on sample trajectories. The control inputs are selected to maximize the inner product of the system dynamics with the steepest descent direction of the harmonic CLBF. Numerical results are presented with four different systems under different reach-avoid environments. Harmonic CLBFs show a significantly low risk of entering unsafe regions and a high probability of entering the goal region. 
\end{abstract}

\section{Introduction}

For safety-critical systems, ensuring stability and safety is of paramount importance. One approach to address this issue for such systems is to model it as a constrained optimal control problem.
The objective of constrained optimal control is to design a control system that optimizes a notion of performance while satisfying a set of constraints. These constraints are usually designed to prevent catastrophic events, ensure safety, and take into multiple objectives with bounds. Furthermore, with experiments conducted in real life, resetting the environment if the agent goes outside a domain can be expensive \cite{nguyen2023reset}.
One of the most common scenarios for safety-critical systems is the reach-avoid problem, which can be regarded as a subset of constrained optimal control problems where a trajectory aims to reach a goal state while avoiding a set of unsafe states. 

In recent years, there has been a growing interest in reach-avoid problems among the control theory and reinforcement learning community. 
Such methods involve learning control Lyapunov functions to certify reachability and control barrier functions that satisfy avoidability \cite{gurriet2018towards,choi2020reinforcement}.
In reinforcement learning, agents learn optimal policies through exploration of the environment with unknown dynamics \cite{sutton2018reinforcement}. Constraints are imposed using penalty methods \cite{heess2017emergence} and Lagrangian methods \cite{ray2019benchmarking}. In these methods, two value functions are computed to estimate cumulative rewards and cumulative costs. 

The problem with using two certificates or two value functions is that it leads to conflicts in the control policy whether to satisfy reachability or avoidability at a given point in time.
To unify the conflicts, reachability and safety have been combined into a single certificate called the control Lyapunov barrier function (CLBF). It is shown in
\cite{MENG2022110478} that theoretically there exists a single Lyapunov barrier function for such reach-avoid type specifications with stability and safety guarantees, provided that such specifications are robustly satisfiable.  
Moreover, \cite{dawson2021safe} introduces robust CLBFs and derives a QP-based controller, in which the CLBF is computed using neural networks by encoding the constraints in their loss function.
In a similar manner but with reinforcement learning, the Lyapunov barrier-based actor-critic method is proposed in~\cite{du2023reinforcement} to find a controller that satisfies both certificates with a CLBF that is obtained by solving the Lagrangian relaxation of the constrained optimization problem.
Unfortunately, both methods do not provide guarantees that their value function $V$ satisfies all the CLBF constraints, and this is evidenced by their experiments. For example, in 2D quadrotor control problems, the contour plots of the derived CLBF do not completely cover the unsafe regions.
Our work addresses this issue by enforcing the CLBF constraints as boundary conditions for harmonic functions. Numerical results demonstrate that this technique indeed improves safety rates, compared with the results reported in \cite{dawson2021safe}. 

Harmonic functions are solutions to the Laplace equation, which is a second-order linear elliptic partial differential equation (PDE). They satisfy the maximum principle on a compact set, thus making it easier to impose CLBF constraints. They do not have any critical points other than saddle points in the interior of their domain, which makes it easy to derive optimal control strategies. 
Harmonic functions have been used in the control literature to derive potential fields in past works. In the work of 
\cite{kim1992real}, harmonic potential functions with panel methods were used to build potential fields over a task space. The work of \cite{akishita1990lapace} used harmonic functions with complex variables and conformal mappings to achieve moving obstacle avoidance. Furthermore, \cite{connolly1990path} combines harmonic functions using superposition to derive potential fields.

In this paper, we will focus on reach-avoid problems, where an agent must reach a goal region while avoiding unsafe regions. For example, the unsafe regions could be regions on a floor that have holes or pillars. We achieve this by introducing harmonic control Lyapunov barrier functions (harmonic CLBFs) that use Laplace's equation to encode the properties of control Lyapunov barrier functions (CLBFs).
Furthermore, Laplace's equation can be solved using numerical methods such as finite element methods (FEMs) and do not be trained using neural networks based on sample trajectories as done in prior work, thus significantly reducing the computational cost of reach-avoid experiments. 

In all, the main contributions of this paper are threefold. Firstly, we propose a novel method to unify harmonic functions with CLBFs to provide both stability and safety guarantees. Secondly, under this framework, we show that optimal controllers for reach-avoid tasks can be derived using gradient-based methods directly with no need for searching such controllers by training a neural network or solving optimal problems. Lastly, we conduct a set of numerical experiments that show a significantly low risk of trajectories entering unsafe regions with the proposed approach.

\section{Preliminaries}

Throughout this work, we consider a nonlinear control system of the form 
\begin{equation}
\dot{x} = f(x,u), \quad x(0)=x_{0},
\label{eq:dyn}
\end{equation}
where $x \in \mathcal{S}\subset \mathbb{R}^n$ is the state of the system, and $u \in U \subset \mathbb{R}^{m}$ is the control input. 

Let $\mathcal{S}$ be a compact subspace of $\R^n$ denoting the space of all admissible states in a control problem. Let $\mathcal{S}_{goal}\subset\mathcal{S}$ be an open set denoting the space of states that the controller aims to reach, and let $\mathcal{S}_{unsafe}$ be a compact subset of $\mathcal{S}$ denoting the space of states the controller aims to avoid. Define $\mathcal{S}_{safe}=\mathcal{S}\backslash\{\overline{\mathcal{S}_{goal}}\cup \mathcal{S}_{unsafe}\}$. 

\subsection{Control Lyapunov Barrier Functions}

\begin{definition}[Control Lyapunov Barrier Function \cite{dawson2021safe,du2023reinforcement}]
    A function $V:\mathcal{S}\to\R$ is a control Lyapunov barrier function (CLBF) if, for some constants $c,\lambda>0$:
    \begin{enumerate}
        \item $V(s)=0$ for all $s\in \mathcal{S}_{goal}$; 
        \item $V(s)>0$ for all $s\in \mathcal{S}\backslash \mathcal{S}_{goal}$; 
        \item $V(s)\geq c$ for all $s\in \mathcal{S}_{unsafe}$; 
        \item $V(s)<c$ for all $s\in \mathcal{S}\backslash\mathcal{S}_{unsafe}$; 
        \item There exists a controller $\pi:\,S_{safe} \rightarrow U$ such that $\langle \nabla V(s),f(s,\pi(s))\rangle+\lambda V(s)\leq 0$ for all $s\in S_{safe}$.
    \end{enumerate}
\label{def:clbf}
\end{definition}

\subsection{Harmonic Functions}

\begin{definition}[Harmonic Function \cite{gilbarg1983elliptic}]
    A harmonic function $u$ is a $C^2$-function that satisfies the Laplace equation $\nabla^2u=\nabla\cdot\nabla u=0$
\end{definition}

In the following theorems, we introduce properties of harmonic functions that are of importance in the field of elliptic PDEs, differential geometry, and complex analysis.

\begin{theorem}[Mean Value Theorem \cite{protter1983maximum}]
\label{thm:mean_value}
    If $u$ is harmonic on a domain $\mathcal{S}$, then $u$ satisfies the mean value property in $\mathcal{S}$. Furthermore, if $x\in\mathcal{S}$ and $r>0$ are such that $\overline{B_r(x)}\subset\mathcal{S}$, where $B_r(x)$ is a sphere of radius $r$ centered at $x$, then
    \begin{equation}
        u(x)=\frac{\int_{B_r(x)}u(y)dy}{\int_{B_r(x)}dy}.
    \end{equation}
\end{theorem}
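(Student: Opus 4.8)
The plan is to prove the solid-ball average formula by first establishing the spherical (surface) average property and then integrating it over concentric shells. The central object is the spherical average
\begin{equation*}
\phi(\rho)=\frac{1}{|\partial B_\rho(x)|}\int_{\partial B_\rho(x)}u(y)\,dS(y),
\end{equation*}
defined for $0<\rho\le r$, where $\partial B_\rho(x)$ is the boundary sphere of radius $\rho$. The goal of the first stage is to show that $\phi$ is constant in $\rho$ and equal to $u(x)$.

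First I would rewrite $\phi$ via the change of variables $y=x+\rho\omega$, with $\omega$ ranging over the unit sphere $S^{n-1}$, giving $\phi(\rho)=\frac{1}{\sigma_{n-1}}\int_{S^{n-1}}u(x+\rho\omega)\,dS(\omega)$, where $\sigma_{n-1}$ is the surface area of $S^{n-1}$. This pulls the $\rho$-dependence entirely into the integrand, so differentiating under the integral sign yields $\phi'(\rho)=\frac{1}{\sigma_{n-1}}\int_{S^{n-1}}\nabla u(x+\rho\omega)\cdot\omega\,dS(\omega)$. Recognizing $\nabla u\cdot\omega$ as the outward normal derivative $\partial u/\partial\nu$ on $\partial B_\rho(x)$ and undoing the change of variables, I then apply the divergence theorem (Green's first identity) to convert the surface integral of the normal derivative into a volume integral of the Laplacian:
\begin{equation*}
\phi'(\rho)=\frac{1}{|\partial B_\rho(x)|}\int_{\partial B_\rho(x)}\frac{\partial u}{\partial\nu}\,dS=\frac{1}{|\partial B_\rho(x)|}\int_{B_\rho(x)}\nabla^2 u\,dy=0,
\end{equation*}
where the last equality is exactly the harmonicity hypothesis $\nabla^2 u=0$. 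Hence $\phi$ is constant on $(0,r]$; letting $\rho\to 0^{+}$ and using continuity of the $C^2$ function $u$ identifies that constant as $u(x)$, establishing the surface mean value property $\phi(\rho)\equiv u(x)$.

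To recover the stated volume average, I would integrate the surface averages over the radial variable, writing the ball integral in polar form:
\begin{equation*}
\int_{B_r(x)}u(y)\,dy=\int_0^r\!\left(\int_{\partial B_\rho(x)}u(y)\,dS(y)\right)d\rho=\int_0^r u(x)\,|\partial B_\rho(x)|\,d\rho=u(x)\int_{B_r(x)}dy,
\end{equation*}
where the middle step inserts $\int_{\partial B_\rho(x)}u\,dS=u(x)\,|\partial B_\rho(x)|$ from the surface property and the last step integrates the shell areas back into the ball volume. Dividing through by $\int_{B_r(x)}dy$ gives the claimed identity.

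The main obstacle is the differentiation-under-the-integral step together with the clean application of the divergence theorem: both require that $\overline{B_r(x)}\subset\mathcal{S}$, so that $u$ and $\nabla u$ are defined and regular up to and including the boundary sphere for every $\rho\le r$. This is precisely the hypothesis provided, which makes the radial derivative justified and the surface-to-volume conversion valid; everything else is bookkeeping with polar coordinates and a continuity limit.
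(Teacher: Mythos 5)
Your proof is correct: it is the classical argument for the mean value property --- show the spherical average $\phi(\rho)$ has vanishing derivative via Green's identity and harmonicity, identify the constant as $u(x)$ by continuity as $\rho\to 0^{+}$, then integrate over shells to pass from the surface average to the solid-ball average. Note that the paper itself offers no proof of this statement; it is quoted as a known classical result with a citation to Protter and Weinberger, and your argument is precisely the standard proof found in such references, so there is no substantive divergence to report. The only cosmetic point is that the paper's statement loosely calls $B_r(x)$ a ``sphere'' while integrating over the solid ball; you correctly read it as a ball average, which is the intended meaning.
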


This theorem is essential for proving Theorems \ref{thm:strong_max} and \ref{thm:weak_max} that provide bounds on harmonic functions.

\begin{theorem}[Strong Maximum Principle) \cite{gilbarg1983elliptic}]
\label{thm:strong_max}
    If $u$ is harmonic on a domain $\mathcal{S}$ and $u$ has its maximum in $\mathcal{S}\backslash\partial\mathcal{S}$, then $u$ is constant.
\end{theorem}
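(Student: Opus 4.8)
The plan is to combine the mean value property from Theorem~\ref{thm:mean_value} with a topological connectedness argument. Let $M=\sup_{\mathcal{S}}u$, which is attained at some interior point by hypothesis, and consider the level set $A=\{x\in\mathcal{S}\backslash\partial\mathcal{S}:u(x)=M\}$. I would show that $A$ is simultaneously nonempty, relatively closed, and relatively open within the interior $\mathcal{S}\backslash\partial\mathcal{S}$. Since a domain is connected (so its interior is connected), a nonempty subset that is both relatively open and relatively closed must be the entire interior; continuity of $u$ then extends the conclusion to all of $\mathcal{S}$, giving $u\equiv M$.

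Nonemptiness is immediate from the assumption that the maximum is attained at an interior point. Relative closedness follows from continuity of $u$, since $A$ is the preimage of $\{M\}$ under $u$ intersected with the open set $\mathcal{S}\backslash\partial\mathcal{S}$.

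The decisive step is openness, and this is where Theorem~\ref{thm:mean_value} enters. Fix any $x\in A$ and choose $r>0$ small enough that $\overline{B_r(x)}\subset\mathcal{S}\backslash\partial\mathcal{S}$. The mean value property gives
\begin{equation}
M=u(x)=\frac{\int_{B_r(x)}u(y)\,dy}{\int_{B_r(x)}dy}.
\end{equation}
Since $u(y)\le M$ everywhere by definition of $M$, the integrand $M-u(y)$ is nonnegative, yet rearranging the identity above shows that its integral over $B_r(x)$ vanishes. Continuity of $u$ then forces $u(y)=M$ for every $y\in B_r(x)$: if $u$ dipped below $M$ at even one point, it would do so on an open subregion, making the integral of $M-u$ strictly positive. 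Hence $B_r(x)\subset A$, so $A$ is open in the interior.

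I expect the main obstacle to be making the ``vanishing integral of a nonnegative continuous function implies the function is identically zero'' step fully rigorous, and correctly invoking the connectedness of the domain to close the argument. Both are standard, but they are precisely the points where continuity and the topological hypothesis on $\mathcal{S}$ are genuinely used; the mean value property itself does the essential work of propagating the maximum from a single point to an entire neighborhood.
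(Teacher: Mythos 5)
Your proof is correct and takes exactly the route the paper intends: the paper itself offers no proof of this theorem (it is quoted from \cite{gilbarg1983elliptic}), but it states that the mean value property of Theorem~\ref{thm:mean_value} is ``essential for proving'' it, and your mean-value-plus-connectedness argument is precisely that classical proof. One minor point: since a domain is by definition open and connected, $\mathcal{S}\backslash\partial\mathcal{S}=\mathcal{S}$, so you need not appeal to the inference that a connected set has a connected interior (which is false for general sets, e.g.\ two closed disks meeting at a point); connectedness of $\mathcal{S}$ itself is all the open-closed argument requires.
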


\begin{theorem}[Weak Maximum Principle \cite{gilbarg1983elliptic}]
\label{thm:weak_max}
    If $u\in C^2(\mathcal{S})\cup C^1(\partial\mathcal{S})$ is harmonic in a bounded domain $\mathcal{S}$, then 
    \begin{equation}
    \max_{\overline{\mathcal{S}}}u=\max_{\partial\mathcal{S}}u.
    \end{equation}
\end{theorem}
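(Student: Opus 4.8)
The plan is to deduce the weak maximum principle directly from the strong maximum principle (Theorem \ref{thm:strong_max}) by a case analysis on the location of the maximizer. First I would observe that since $\mathcal{S}$ is a bounded domain, its closure $\overline{\mathcal{S}}$ is compact, and the regularity hypothesis guarantees that $u$ extends continuously to $\overline{\mathcal{S}}$. By the extreme value theorem, $u$ attains its maximum over $\overline{\mathcal{S}}$ at some point $x_0\in\overline{\mathcal{S}}=\mathcal{S}\cup\partial\mathcal{S}$. Because $\partial\mathcal{S}\subset\overline{\mathcal{S}}$, the inequality $\max_{\partial\mathcal{S}}u\le\max_{\overline{\mathcal{S}}}u$ is immediate, so the entire content of the statement is the reverse inequality.

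To establish $\max_{\overline{\mathcal{S}}}u\le\max_{\partial\mathcal{S}}u$, I would split on where $x_0$ lies. If $x_0\in\partial\mathcal{S}$, then $\max_{\overline{\mathcal{S}}}u=u(x_0)\le\max_{\partial\mathcal{S}}u$ and we are done. If instead $x_0\in\mathcal{S}$ lies in the interior, then $u$ attains its maximum at an interior point of the domain, so Theorem \ref{thm:strong_max} forces $u$ to be constant on $\mathcal{S}$. By continuity up to the boundary this same constant value is taken on $\partial\mathcal{S}$, whence $\max_{\partial\mathcal{S}}u=u(x_0)=\max_{\overline{\mathcal{S}}}u$. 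In either case the two maxima coincide, which is the claim.

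A self-contained alternative, which I would keep in reserve, bypasses the strong principle and instead uses the perturbation $u_\varepsilon(x)=u(x)+\varepsilon|x|^2$ for $\varepsilon>0$. Since $\nabla^2 u_\varepsilon=\nabla^2 u+2n\varepsilon=2n\varepsilon>0$, the function $u_\varepsilon$ has a strictly positive Laplacian and therefore cannot attain an interior maximum: at any interior extremum its Hessian would be negative semidefinite, forcing its trace $\nabla^2 u_\varepsilon\le 0$, a contradiction. Hence $\max_{\overline{\mathcal{S}}}u_\varepsilon=\max_{\partial\mathcal{S}}u_\varepsilon$, and bounding $\max_{\overline{\mathcal{S}}}u\le\max_{\overline{\mathcal{S}}}u_\varepsilon$ together with $\max_{\partial\mathcal{S}}u_\varepsilon\le\max_{\partial\mathcal{S}}u+\varepsilon M$, where $M=\max_{\overline{\mathcal{S}}}|x|^2<\infty$, I would let $\varepsilon\to 0^{+}$ to recover $\max_{\overline{\mathcal{S}}}u\le\max_{\partial\mathcal{S}}u$.

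The step I expect to carry the most weight is the passage from an interior maximizer to global constancy, which is exactly where Theorem \ref{thm:strong_max} (and through it the mean value property of Theorem \ref{thm:mean_value}) does the real work; the remaining steps are bookkeeping. The only genuine subtleties are regularity and topology: I must use that $u$ extends continuously to $\overline{\mathcal{S}}$ so that the extreme value theorem applies and so that constancy on $\mathcal{S}$ transfers to $\partial\mathcal{S}$, and I must use that a domain is connected, since otherwise the strong principle would only yield constancy on the connected component containing $x_0$ rather than on all of $\mathcal{S}$.
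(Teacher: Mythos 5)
Your proof is correct. There is essentially nothing in the paper to compare it against: the paper states this theorem as a classical result quoted from \cite{gilbarg1983elliptic} and gives no proof, only the remark that the mean value property (Theorem \ref{thm:mean_value}) is the essential ingredient behind Theorems \ref{thm:strong_max} and \ref{thm:weak_max}. Your primary argument follows precisely that implied chain---deduce the weak principle from the strong one, which in turn rests on the mean value property---and your case analysis is complete: the inequality $\max_{\partial\mathcal{S}}u\le\max_{\overline{\mathcal{S}}}u$ is trivial, an interior maximizer forces constancy by Theorem \ref{thm:strong_max}, and you correctly identify the two hypotheses doing the real work, namely continuity of $u$ up to the compact closure $\overline{\mathcal{S}}$ (needed both for the extreme value theorem and for transferring constancy to $\partial\mathcal{S}$) and connectedness of the domain (without which the strong principle would only give constancy on the component containing the maximizer). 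Your reserve argument via $u_\varepsilon=u+\varepsilon|x|^2$ is also valid and is in fact more self-contained: since $\nabla^2u_\varepsilon=2n\varepsilon>0$, an interior maximum is ruled out by the second-derivative test alone, bypassing both the strong principle and the mean value property, and as a bonus it never uses connectedness, so it proves the weak principle on an arbitrary bounded open set rather than only on a domain.
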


This shows that Theorems \ref{thm:strong_max} and \ref{thm:weak_max} can be exploited to derive a CLBF that has its maximum in unsafe regions and minimum in goal regions.

\section{Harmonic CLBF}

In this work, we explore the intersection between CLBF and harmonic functions. By exploiting the maximum principle, we can derive a function $V:S\to\R$ that satisfies properties (1)--(4) for CLBF by imposing boundary conditions on the CLBF.

\begin{definition}[Harmonic CLBF]
    \label{def:harmonicclbf}
    A function $V\in C^2(\mathcal{S})\cup C^1(\partial \mathcal{S})$ is a harmonic CLBF if it satisfies the following conditions:
    \begin{enumerate}
        \item $\nabla^2V(s)=0$ for all $s\in \mathcal{S}_{safe}$
        \item $V(s)=0$ for all $s\in\overline{\mathcal{S}_{goal}}$
        \item $V(s)=c$ for all $s\in\partial\mathcal{S}\cup \mathcal{S}_{unsafe}$.
    \end{enumerate}
    In other words, $V(s)$ is a solution to the boundary value problem given above.
\end{definition}

\begin{theorem}
    All harmonic CLBFs $V$ satisfy properties (1)--(4) in Definition \ref{def:clbf}.
\end{theorem}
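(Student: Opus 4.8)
The plan is to verify each of the four CLBF properties in Definition \ref{def:clbf} separately, using the boundary conditions of Definition \ref{def:harmonicclbf} directly for properties (1) and (3), and the maximum principles (Theorems \ref{thm:weak_max} and \ref{thm:strong_max}) for the positivity and upper-bound properties (2) and (4). First I would dispatch the two easy cases. Property (1) is immediate: since $\mathcal{S}_{goal}\subset\overline{\mathcal{S}_{goal}}$, condition (2) of Definition \ref{def:harmonicclbf} gives $V(s)=0$ on all of $\mathcal{S}_{goal}$. Property (3) is equally direct: condition (3) of Definition \ref{def:harmonicclbf} forces $V(s)=c$ on $\mathcal{S}_{unsafe}$, so in particular $V(s)\geq c$ there.

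The substantive part is establishing the strict two-sided bound $0<V<c$ on the open safe region $\mathcal{S}_{safe}$, from which properties (2) and (4) then follow. Here I would work on the closure $\overline{\mathcal{S}_{safe}}$, on which $V$ is harmonic in the interior and whose topological boundary decomposes into the portion adjacent to the goal, where $V=0$, and the portion consisting of $\partial\mathcal{S}$ together with the boundary of $\mathcal{S}_{unsafe}$, where $V=c$. Applying the weak maximum principle (Theorem \ref{thm:weak_max}) to $V$ and, after negation, to $-V$ over this bounded domain yields $\max_{\overline{\mathcal{S}_{safe}}}V=c$ and $\min_{\overline{\mathcal{S}_{safe}}}V=0$, i.e. $0\leq V\leq c$ throughout.

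To upgrade these to strict inequalities in the interior I would invoke the strong maximum principle (Theorem \ref{thm:strong_max}): because $V$ attains both boundary values $0$ and $c>0$, it is non-constant, so neither its maximum $c$ nor its minimum $0$ can be attained at an interior point of $\mathcal{S}_{safe}$. Hence $0<V(s)<c$ for every $s\in\mathcal{S}_{safe}$. Assembling the pieces then gives the remaining properties: for property (2), $V>0$ holds on $\mathcal{S}_{safe}$ by the strict lower bound and on $\mathcal{S}_{unsafe}$ (where $V=c>0$); for property (4), $V<c$ holds on $\mathcal{S}_{safe}$ by the strict upper bound and on $\overline{\mathcal{S}_{goal}}$ (where $V=0<c$).

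The main obstacle is precisely the passage from the weak to the strong bound: one must certify that $V$ is genuinely non-constant before Theorem \ref{thm:strong_max} can exclude interior extrema, and one must confirm that the hypotheses of the maximum principles—boundedness of $\mathcal{S}_{safe}$ together with the $C^2$-interior and $C^1$-boundary regularity built into Definition \ref{def:harmonicclbf}—are in force. A secondary technical wrinkle is that $\mathcal{S}_{goal}$ is open, so $\partial\mathcal{S}_{goal}\subset\mathcal{S}\setminus\mathcal{S}_{goal}$ carries the value $V=0$, and symmetrically $\partial\mathcal{S}$ carries the value $V=c$; the strict inequalities of properties (2) and (4) are therefore naturally read on the open interior regions $\mathcal{S}\setminus\overline{\mathcal{S}_{goal}}$ and $\mathcal{S}\setminus\mathcal{S}_{unsafe}$ respectively, which are exactly the regions on which the reach-avoid controller of property (5) must act.
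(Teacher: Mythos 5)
Your proof is correct and follows essentially the same route as the paper: properties (1) and (3) are read off the boundary conditions of Definition \ref{def:harmonicclbf}, and properties (2) and (4) come from the maximum principle applied on $\overline{\mathcal{S}_{safe}}$. The execution differs slightly: the paper argues by contradiction in a single step --- a point with $V\leq 0$ (resp.\ $V\geq c$) outside the goal (resp.\ unsafe) set would force an interior minimum (resp.\ maximum) of the harmonic function on $\mathcal{S}_{safe}$, so Theorem \ref{thm:strong_max} would make $V$ constant, contradicting the two distinct boundary values --- whereas you first obtain $0\leq V\leq c$ from the weak principle (Theorem \ref{thm:weak_max}) and then upgrade to strict inequalities with the strong principle; the two arguments are logically equivalent here. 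What your write-up adds, and the paper glosses over, is the boundary wrinkle you flag at the end: since $\mathcal{S}_{goal}$ is open and Definition \ref{def:harmonicclbf} forces $V=0$ on $\overline{\mathcal{S}_{goal}}$, the points of $\partial\mathcal{S}_{goal}$ lie in $\mathcal{S}\backslash\mathcal{S}_{goal}$ yet carry $V=0$, so property (2) as literally stated fails there (and similarly property (4) can fail on $\partial\mathcal{S}$ if $\partial\mathcal{S}\not\subset\mathcal{S}_{unsafe}$); the paper's contradiction argument silently skips these points precisely because they are not interior to $\mathcal{S}_{safe}$. Your resolution --- reading the strict inequalities on the open regions $\mathcal{S}\backslash\overline{\mathcal{S}_{goal}}$ and $\mathcal{S}\backslash\mathcal{S}_{unsafe}$ --- is the right repair of the statement. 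One caveat applies to both your argument and the paper's: the non-constancy step tacitly assumes each connected component of $\mathcal{S}_{safe}$ meets both pieces of boundary data ($V=0$ and $V=c$); a component touching only one piece would carry the constant value $0$ or $c$, so connectedness of $\mathcal{S}_{safe}$ (true in the paper's examples) is what makes the conclusion hold.
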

\begin{proof}
    Properties (1) and (3) in Definition \ref{def:clbf} are trivially satisfied as they are set as boundary conditions from properties (2) and (3) in Definition \ref{def:harmonicclbf}.
    
    If $V(s)\leq 0$ for some $s\in\mathcal{S}\backslash\mathcal{S}_{goal}$ ($V(s)\geq c$ for some $s\in\mathcal{S}\backslash\mathcal{S}_{unsafe}$), then $V(s)$ has its minimum (maximum) in the interior of $\mathcal{S}_{safe}$. By Theorem \ref{thm:strong_max}, $V$ is then constant in $\mathcal{S}_{safe}$, which contradicts the non-constant boundary conditions imposed on it. Thus, harmonic CLBFs satisfy properties (2) and (4) in Definition \ref{def:clbf}.
\end{proof}
\begin{proposition}
    Every reach-avoid problem admits a unique harmonic CLBF.
\end{proposition}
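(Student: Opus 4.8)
The plan is to establish existence and uniqueness separately, recognizing that the proposition asserts the harmonic CLBF is a well-posed solution to the boundary value problem in Definition \ref{def:harmonicclbf}. The key observation is that conditions (1)--(3) of that definition constitute precisely a \emph{Dirichlet problem} for Laplace's equation on the domain $\mathcal{S}_{safe}$: we require $\nabla^2 V = 0$ on the open region $\mathcal{S}_{safe}$, with prescribed boundary data $V = 0$ on $\overline{\mathcal{S}_{goal}}$ and $V = c$ on $\partial\mathcal{S}\cup\mathcal{S}_{unsafe}$. First I would verify that the topological boundary of $\mathcal{S}_{safe}$ is exactly $\partial(\mathcal{S}_{safe}) = \partial\mathcal{S}\cup\partial\mathcal{S}_{goal}\cup\partial\mathcal{S}_{unsafe}$ (up to the stated closures), so that the three boundary conditions together specify a continuous Dirichlet datum $g$ on all of $\partial(\mathcal{S}_{safe})$, with $g$ taking the value $0$ on the goal portion and $c$ on the outer/unsafe portion.

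For existence, the plan is to invoke the classical solvability theory for the Dirichlet problem for Laplace's equation on a bounded domain; this is standard in the elliptic PDE literature (e.g.\ \cite{gilbarg1983elliptic}), and follows either from Perron's method of subharmonic functions or from the variational/Hilbert-space formulation. The essential hypothesis is that the boundary $\partial(\mathcal{S}_{safe})$ be regular enough (for instance, satisfying an exterior cone or exterior sphere condition) so that every boundary point is a regular point and the solution attains the continuous boundary data. For uniqueness, the argument is short and relies directly on results already available in the excerpt: if $V_1$ and $V_2$ were two harmonic CLBFs, then $w = V_1 - V_2$ would be harmonic on $\mathcal{S}_{safe}$ with $w = 0$ on all of $\partial(\mathcal{S}_{safe})$. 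By the Weak Maximum Principle (Theorem \ref{thm:weak_max}) applied to both $w$ and $-w$, we get $\max_{\overline{\mathcal{S}_{safe}}} w = \max_{\partial(\mathcal{S}_{safe})} w = 0$ and likewise $\min_{\overline{\mathcal{S}_{safe}}} w = 0$, forcing $w \equiv 0$ and hence $V_1 = V_2$.

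The main obstacle I anticipate is the existence half, specifically the regularity of the domain. A ``reach-avoid problem'' as loosely specified only asks that $\mathcal{S}$ be compact, $\mathcal{S}_{goal}$ open, and $\mathcal{S}_{unsafe}$ compact; these hypotheses do not by themselves guarantee that $\partial(\mathcal{S}_{safe})$ is smooth or even that $\mathcal{S}_{safe}$ is connected or has regular boundary points. Without some regularity assumption, solvability of the Dirichlet problem (attainment of the boundary values in the classical sense) can fail at irregular boundary points, and the word ``unique'' must be interpreted carefully if the domain has multiple components sharing boundary labels. I would therefore either add a standing assumption that the relevant region has a sufficiently regular (e.g.\ Lipschitz or $C^1$) boundary, or weaken the conclusion to existence and uniqueness of a weak (finite-energy) solution in $H^1(\mathcal{S}_{safe})$, for which the Lax--Milgram framework applies directly and uniqueness again follows from the maximum principle argument above. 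The uniqueness step is entirely routine given Theorem \ref{thm:weak_max}; it is the precise statement of existence, and the implicit domain-regularity hypotheses it requires, that deserve the most care.
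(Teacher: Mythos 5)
Your uniqueness argument is exactly the paper's proof: the paper also forms the difference $U = V_1 - V_2$ of two putative harmonic CLBFs, observes it is harmonic on $\mathcal{S}_{safe}$ with zero boundary data on $\overline{\mathcal{S}_{goal}}$ and on $\partial\mathcal{S}\cup\mathcal{S}_{unsafe}$, and applies the weak maximum principle (Theorem \ref{thm:weak_max}) to conclude $U \equiv 0$. Where you genuinely diverge is that you treat existence as a claim that needs proof --- and you are right to, since the proposition says every reach-avoid problem \emph{admits} a harmonic CLBF, which is an existence assertion. The paper's proof is silent on this point: it only shows that there cannot be two distinct solutions, implicitly taking solvability of the Dirichlet problem for granted (presumably because the authors construct solutions numerically via FEM in practice). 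Your sketch of existence via Perron's method or the variational formulation, together with the observation that the bare hypotheses of a reach-avoid problem ($\mathcal{S}$ compact, $\mathcal{S}_{goal}$ open, $\mathcal{S}_{unsafe}$ compact) do not guarantee a regular boundary for $\mathcal{S}_{safe}$, identifies a real gap in the proposition as stated: without an exterior-cone/sphere condition or a retreat to weak $H^1$ solutions, classical attainment of the boundary data can fail at irregular boundary points. So your proposal is not merely correct; it is more complete than the paper's own proof, and the caveat you raise about domain regularity is a hypothesis the paper ought to state explicitly.
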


\begin{proof}
    Consider an arbitrary reach-avoid problem characterized by $S,S_{goal},S_{unsafe}$.
    Assume for a contradiction that a reach avoid problem has two distinct harmonic CLBFs, $V_1(s)$ and $V_2(s)$.
    
    Define the function $U(s)=V_1(s)-V_2(s)$.
    The Laplacian of $U(s)$ is $\nabla^2U=\nabla^2V_1-\nabla^2V_2=0$ on $S_{safe}$.
    As a result, $U$ is a function satisfying $\nabla^2 U=0$ on $S_{safe}$, $U=0$ on $\overline{S_{goal}}$, and $U=0$ on $\partial\mathcal{S}\cup \mathcal{S}_{unsafe}$.
    By Theorem \ref{thm:weak_max}, $U$ has its maximum and minimum of $0$ on the boundary, so $U=0$ on $S_{safe}$. Thus, the harmonic CLBF is unique.
\end{proof}

We now introduce a sufficient condition for verifying property (5) of the CLBF conditions in Definition~\ref{def:clbf}.

\begin{theorem}
    \label{thm:lambda}
    Let $V$ be a harmonic CLBF, and consider the deterministic nonlinear system~\eqref{eq:dyn}. If there exists a positive constant $\lambda$ such that 
    \begin{equation}
        \label{eq:lambda}
        \lambda \leq -\sup_{x\in\mathcal{S}_{safe}}\inf_{u\in U}\frac{\langle f(x,u),\nabla V(x)\rangle}{V(x)},
    \end{equation}
    then property (5) in Definition~\ref{def:clbf} holds.
\end{theorem}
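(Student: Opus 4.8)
The plan is to rearrange the hypothesis into a pointwise statement and then extract a controller by selecting, at each state, a control that attains the inner infimum. First I would record that $V$ is strictly positive on $\mathcal{S}_{safe}$: this follows from property~(2) of Definition~\ref{def:clbf}, which the preceding theorem establishes for every harmonic CLBF, together with the inclusion $\mathcal{S}_{safe}\subset\mathcal{S}\setminus\mathcal{S}_{goal}$. Positivity of $V$ on $\mathcal{S}_{safe}$ is what makes the quotient $\langle f(x,u),\nabla V(x)\rangle/V(x)$ well defined and, crucially, what allows me to clear the denominator later without reversing inequalities.

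Next I would unpack the hypothesis. Writing the bound as $\sup_{x\in\mathcal{S}_{safe}}\inf_{u\in U}\langle f(x,u),\nabla V(x)\rangle/V(x)\le-\lambda$, the definition of the supremum gives, for every fixed $x\in\mathcal{S}_{safe}$, the pointwise inequality $\inf_{u\in U}\langle f(x,u),\nabla V(x)\rangle/V(x)\le-\lambda$. The remaining task is to convert this infimum bound into the existence of an actual minimizing control at each $x$, from which the feedback $\pi$ is assembled.

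The construction of $\pi$ is the crux. Assuming $U$ is compact and $f(x,\cdot)$ is continuous (the standard hypotheses under which minimization over the control set is well posed), the map $u\mapsto\langle f(x,u),\nabla V(x)\rangle$ is continuous on the compact set $U$, so by the extreme value theorem the infimum is attained at some $u^\star(x)\in U$. I would then set $\pi(x)=u^\star(x)$, choosing any minimizer, so that $\langle f(x,\pi(x)),\nabla V(x)\rangle/V(x)\le-\lambda$ holds for every $x\in\mathcal{S}_{safe}$. Multiplying through by $V(x)>0$ yields $\langle\nabla V(x),f(x,\pi(x))\rangle+\lambda V(x)\le0$, which is exactly property~(5).

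The main obstacle I anticipate is the attainment of the infimum: without compactness of $U$ and continuity in $u$, one can only approach the value $-\lambda$ rather than meet it, and the pointwise selection may fail at states where the infimum equals $-\lambda$ but is not achieved. A secondary technical point, should one insist on an implementable feedback rather than a mere function, is the measurability of $x\mapsto\pi(x)$; this would be handled by a measurable-selection argument (e.g.\ Filippov's lemma) rather than a bare pointwise choice. Since Definition~\ref{def:clbf} asks only for a function $\pi:\mathcal{S}_{safe}\to U$, however, the pointwise existence of a minimizer suffices to close the argument.
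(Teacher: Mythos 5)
Your proof is correct and follows essentially the same route as the paper's: define the feedback by selecting, at each $x\in\mathcal{S}_{safe}$, a control attaining $\inf_{u\in U}\langle f(x,u),\nabla V(x)\rangle$, apply the sup--inf hypothesis pointwise, and clear the positive denominator $V(x)$ to obtain property~(5). You are in fact somewhat more careful than the paper, which simply writes $\pi^*(x)=\arg\inf_{u\in U}\langle f(x,u),\nabla V(x)\rangle$ without addressing attainment of the infimum or the strict positivity of $V$ on $\mathcal{S}_{safe}$ that your argument makes explicit.
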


\begin{proof}
Define an optimal controller $\pi^*(x)$ as 
$$
\pi^*(x) = \arg\inf_{u\in U}\langle f(x,u),\nabla V(x)\rangle. 
$$
Since $\lambda V(x)\geq 0$ for all $x\in\mathcal{S}_{safe}$, 
it follows that 
\begin{align*}
\frac{\langle f(x,\pi^*(x)),\nabla V(x)\rangle}{V(x)} &\le \inf_{u\in U}\frac{\langle f(x,u),\nabla V(x)\rangle}{V(x)}    \\
& \le \sup_{x\in\mathcal{S}_{safe}}\inf_{u\in U}\frac{\langle f(x,u),\nabla V(x)\rangle}{V(x)} \\
&\le -\lambda,
\end{align*}
where the last inequality is precisely (\ref{eq:lambda}). We have verified property (5) in Definition~\ref{def:clbf}.

\end{proof}


Furthermore, a sufficient condition for a system to avoid $\mathcal{S}_{unsafe}$ is:
\begin{equation}
    \label{eq:sup_inf}
    \sup_{x\in\mathcal{S}_{safe}}\inf_{u\in U}\langle f(x,u),\nabla V(x)\rangle\leq 0 
\end{equation}
Since $\mathcal{S}_{unsafe}$ is a closed set, this means $V(x)<c$ for all $x\in\mathcal{S}_{safe}$. Furthermore, since $\frac{\partial}{\partial t}V(x(t))\leq 0$, this means for all $t\in[0,\infty)$, $x(t)$ will never approach a point where $V(x)=c$, meaning, it will never approach a point in $\mathcal{S}_{unsafe}$.

\subsection{Avoiding the saddle points}

As shown in \cite{yanushauskas1969zeros}, all critical points in $V(x)$ are saddle points. Random noise is added in~\cite{ge2015escaping} to parameters while performing gradient descent to guarantee that the algorithm converges to a local minimum. The authors of \cite{lee2016gradient} show that gradient descent with random parameter initialization asymptotically avoids saddle points. So in this paper, we compare deterministic control inputs with stochastic control inputs to see which method shows better convergence to $\mathcal{S}_{goal}$.

At every point $x\in \mathcal{S}_{safe}$, the control input is selected as $u=\text{argmin}_{u\in U}\langle f(x,u),\nabla V(x)\rangle+z$ where $z$ is a small noise sampled from a normal distribution $N(0,\sigma^2)$ to mitigate local optima.

\subsection{Conditions on the noise}
In order to preserve the properties of the CLBF, we introduce bounds that must be imposed on $z$ with Taylor series expansion so that the trajectory avoids unsafe regions when the first-order terms dominate.


    Let $\Delta t$ be the time step size used for numerical simulation. Consider the first-order expansion of $x(t+\Delta t)$ as
    \begin{equation}
        x(t+\Delta t)=x+f(x,u)\Delta t+O(\Delta t^2).
    \end{equation}
    By adding some noise $z$ to the control input to avoid saddle points, we have
    \begin{equation}
        x(t+\Delta t)=x+f(x,u+z)\Delta t+O(\Delta t^2).
    \end{equation}
    Then we apply again the first-order expansion of $f(x,u+z)$, yields
    \begin{equation}
        x(t+\Delta t)=x+f(x,u)\Delta t+f_u(x,u)^Tz\Delta t+O(\Delta t^2).
    \end{equation}
    Also, we consider the first-order expansion of $V(x(t+\Delta t))$:
    \begin{align}
        V(x(t+\Delta t))=&V(x)+\langle\nabla V,f(x,u)+\nabla_uf(x,u)^Tz\rangle\Delta t\nonumber\\&+O(\Delta t^2).
    \end{align}
    To have the safety guarantees, the condition $V(x(t+\Delta t))<c$ needs to be satisfied, that is,
    \begin{equation}
        \langle\nabla V(x),f(x,u)+\nabla_uf(x,u)^Tz\rangle<\frac{c-V(x)}{\Delta t}.
    \end{equation}
    Expanding and simplifying this expression gives the following bounds:
    \begin{equation}
        \label{eq:noise_bounds}
        \langle\nabla_uf(x,u)\nabla V(x),z\rangle<\frac{c-V(x)}{\Delta t}-\langle\nabla V(x), f(x,u)\rangle.
    \end{equation}
Using the Cauchy-Schwarz inequality, a sufficient way to bound the noise $z$ is
\begin{equation}
    ||\nabla_uf(x,u)\nabla V(x)||_2||z||_2<\frac{c-V(x)}{\Delta t}-\langle\nabla V(x), f(x,u)\rangle.
\end{equation}

If $f$ is a control-affine function $f(x,u)=f_1(x)+f_2(x)u$, then we can bound $z$ as:
\begin{equation}
    \label{eq:z_upper_bound}
    ||z||_2<\frac{\frac{c-V(x)}{\Delta t}-\langle\nabla V(x), f(x,u)\rangle}{||f_2(x)||_2||\nabla V(x)||_2}.
\end{equation}
Since Theorem \ref{thm:lambda} requires that under the optimal controller $\pi^*(x)$, $\langle\nabla V(x), f(x,\pi^*(x))\rangle\leq 0$, this shows that the bound imposed on $z$ is positive. 

\begin{remark}
    Numerical solutions in section \ref{sec:results} show that in harmonic CLBFs, the distinction between safe and unsafe regions is unclear. To make the distinction more transparent, we replace the CLBF as a solution to Laplace's equation with a solution to Poisson's equation $\nabla^2V=-6$ in this paper, meaning $V$ is a superharmonic function. However, this method also poses a risk that $V$ has local minima in the interior of its domain \cite{axler2013harmonic}. This is undesirable as local minima are harder to escape compared to saddle points. We will compare harmonic CLBFs with superharmonic CLBFs with numerical results.
\end{remark}

\section{Numerial Results}
\label{sec:results}

In this section, we will explore two different reach-avoid environments, one with four small unsafe regions and the other with two big unsafe regions, for three different systems: Roomba, DiffDrive, and CarRobot \cite{ModernRobotics}. Except that, we also perform a reach-avoid task for 2D Quadrotor \cite{dawson2021safe}. The dynamics of each of these systems are provided in Appendix I along with control inputs that minimize the expression in Equation \eqref{eq:sup_inf} for any $x\in \mathcal{S}_{safe}$. We use $c=1$ in the CLBF defined in Definition~\ref{def:clbf} for all numerical results.

For each environment and system, we compute the harmonic ($\nabla^2V=0$) and superharmonic ($\nabla^2V=-6$) CLBF numerically using finite element methods. For both of the CLBFs, we compute the trajectories of the system with 1,000 different randomly initialized initial conditions and count the number of time steps (of size $\Delta t=0.1$) it takes for the system to reach $\mathcal{S}_{goal}$. We test this setting with both deterministic ($\sigma=0$) and stochastic ($\sigma=0.1$, clipped by the upper bound in Equation \eqref{eq:z_upper_bound}) controllers and report the mean time taken to reach the goal ($\mu_T$), the standard deviation of the time taken to reach the goal ($\sigma_T$), the number of times the system ends in an unsafe region (not included in $\mu_T,\sigma_T$), and the number of times the system does not reach $\mathcal{S}_{goal}$ after 1,000 time steps (not included in $\mu_T,\sigma_T$).

\subsection{Problem I}

In this problem, we explore an environment that contains a goal region near the origin and four unsafe regions in the interior of the domain. The results with three car-like dynamical systems, Roomba, DiffDrive, and CarRobot, are reported.
\begin{align}
    S&=[-1,1]\times[-1,1]\\
    \mathcal{S}_{goal}&=[-0.1,0.1]\times[-0.1,0.1]\\
    \mathcal{S}_{unsafe}&=\partial S\cup C_1\cup C_2\cup C_3\cup C_4,
\end{align}
with the subdomains of the unsafe region given by
\begin{align}
    C_1&=[-0.5,-0.3]\times[-0.5,-0.3]\\
    C_2&=[-0.5,-0.3]\times[0.3,0.5]\\
    C_3&=[0.3,0.5]\times[-0.5,-0.3]\\
    C_4&=[0.3,0.5]\times[0.3,0.5].
\end{align}
Initial conditions are sampled as:
\begin{align}
    x(0),y(0)&\sim U[-0.9,-0.6]\cup[0.6,0.9]\\
    \theta(0)&\sim U[0,2\pi].
\end{align}
We derive a harmonic CLBF using finite element methods using DOLFIN \cite{dolfin}. We used piecewise linear trial functions and a triangular mesh. The domain has been divided into 5,000 triangular meshes of equal area. This numerical solution is plotted in 2D view in Fig. \ref{fig:1_0_2d}.

\begin{figure*}[ht]
    \centering
     \begin{subfigure}[b]{0.24\textwidth}
         \centering
         \includegraphics[width=\linewidth,trim={0 0 0 3cm},clip]{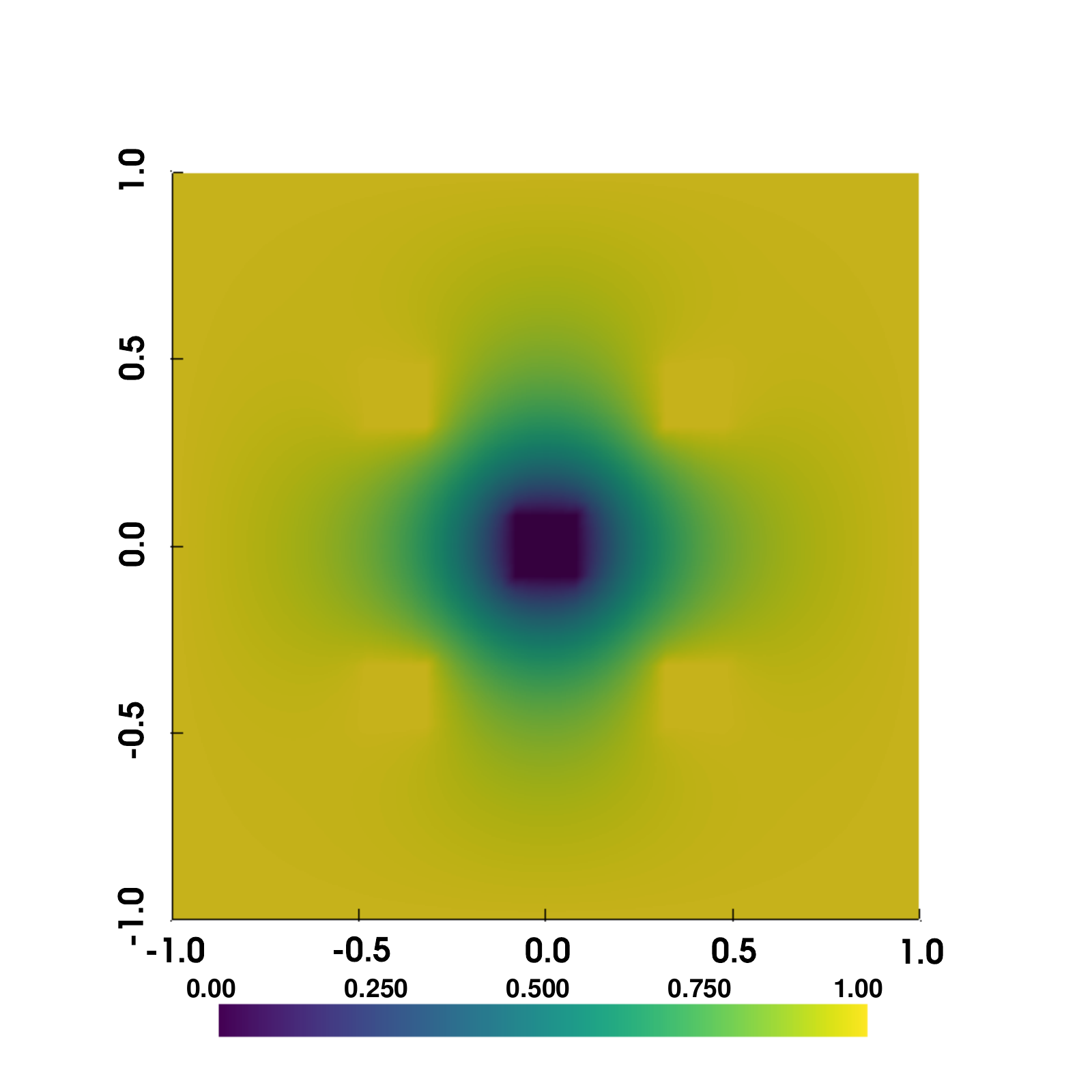}
        \caption{Problem I with $\nabla^2V=0$}
        \label{fig:1_0_2d}
     \end{subfigure}
     \hfill
     \begin{subfigure}[b]{0.24\textwidth}
         \centering
         \includegraphics[width=\linewidth,trim={0 0 0 3cm},clip]{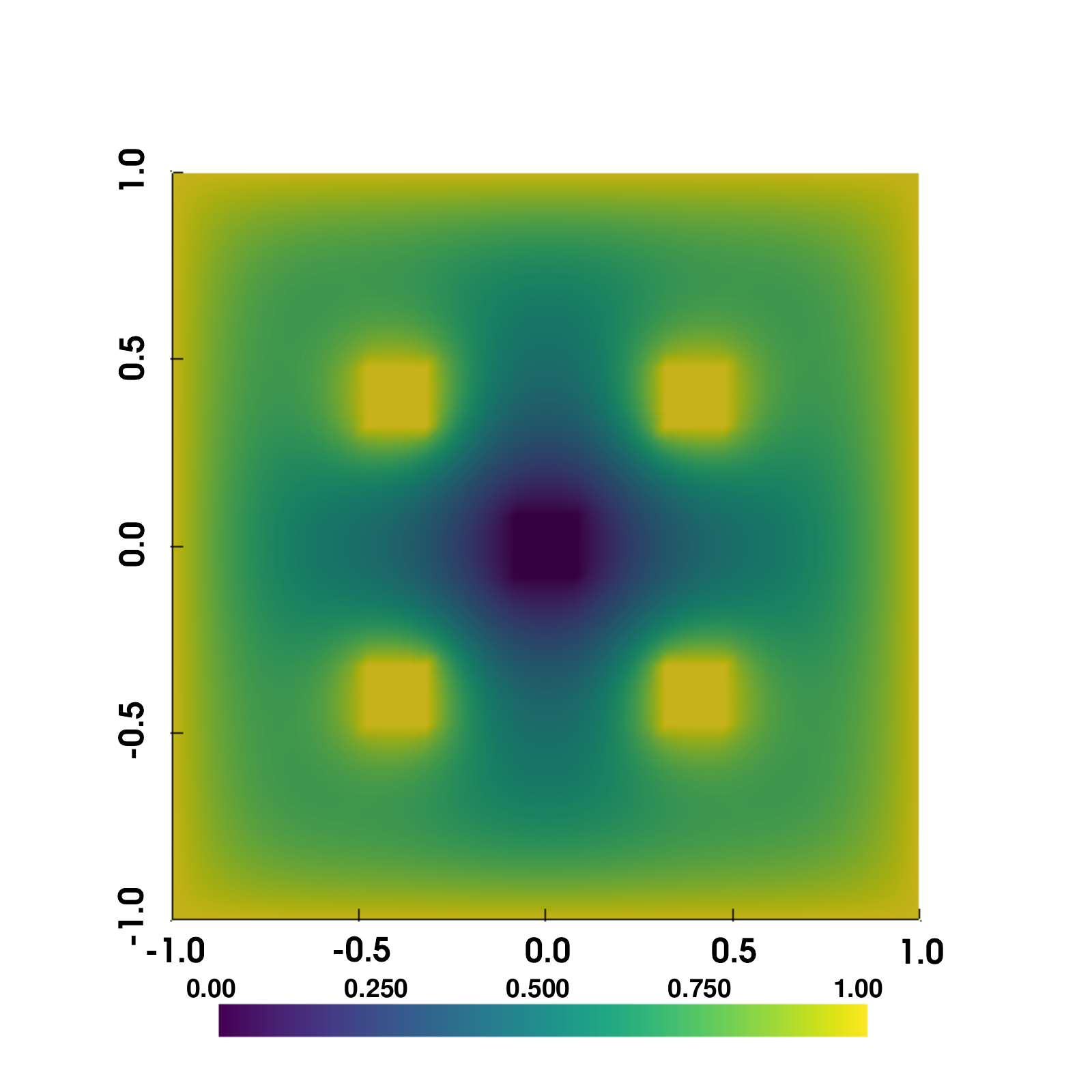}
        \caption{Problem I with $\nabla^2V=-6$}
        \label{fig:1_6_2d}
     \end{subfigure}
     \hfill
     \begin{subfigure}[b]{0.24\textwidth}
         \centering
         \includegraphics[width=\linewidth,trim={0 0 0 3cm},clip]{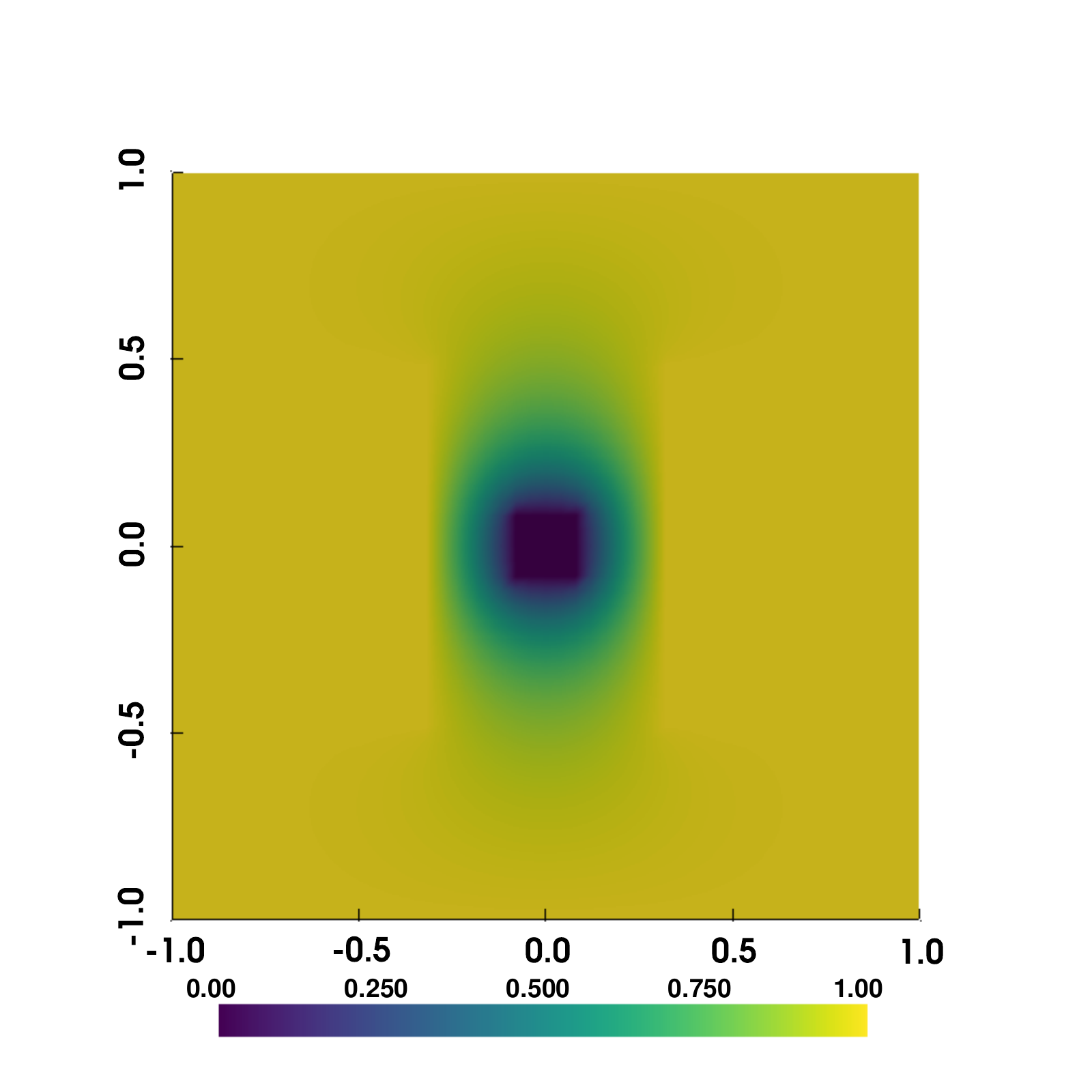}
        \caption{Problem II with $\nabla^2V=0$}
        \label{fig:2_0_2d}
     \end{subfigure}
     \hfill
     \begin{subfigure}[b]{0.24\textwidth}
         \centering
        \includegraphics[width=\linewidth,trim={0 0 0 3cm},clip]{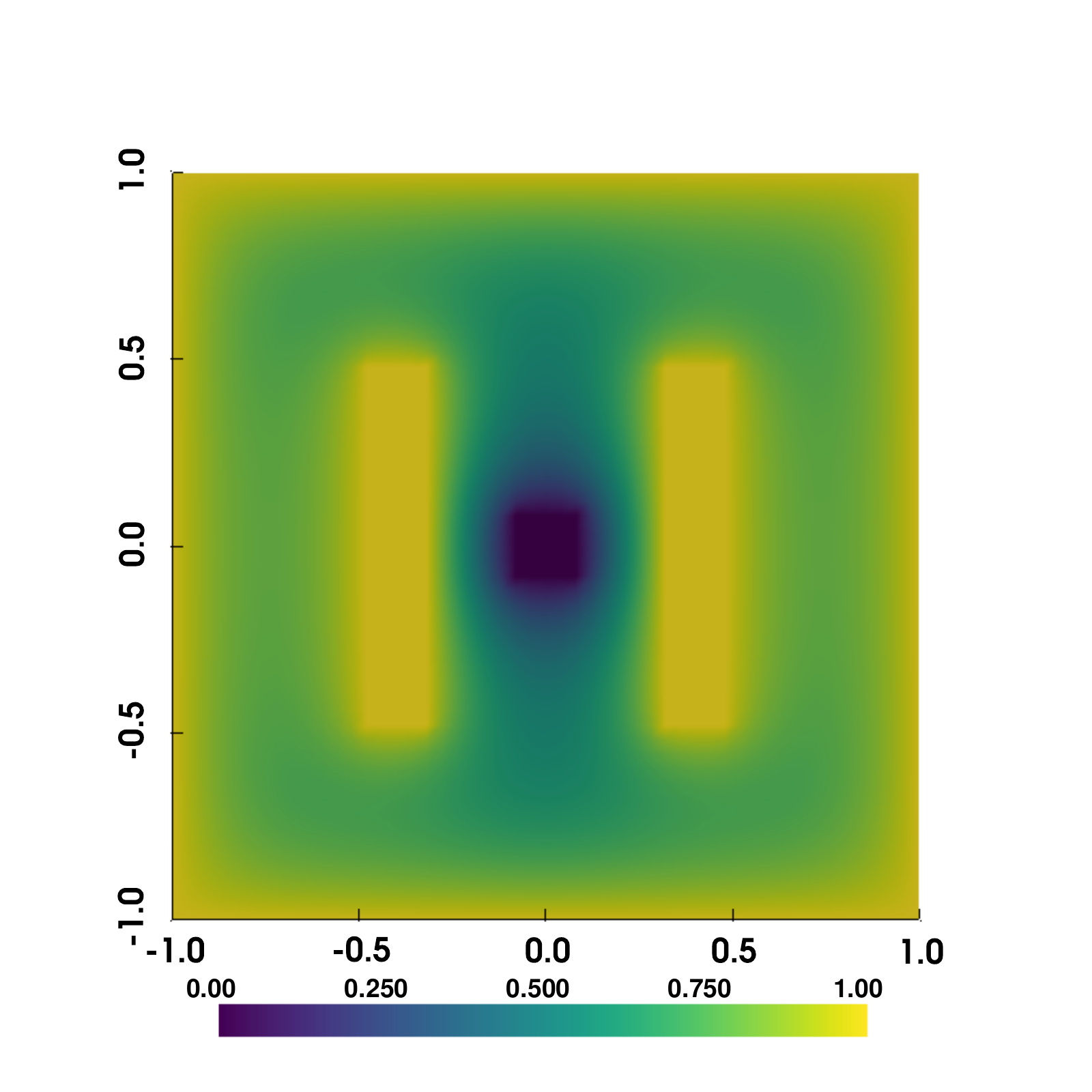}
        \caption{Problem II with $\nabla^2V=-6$}
        \label{fig:2_6_2d}
     \end{subfigure}
        \caption{Contour plots of the CLBFs for problems I and II. (a) \& (b) The plots for problem I with the CLBF obtained by solving Laplace equation (left) and Poisson's equation (right) respectively; (c) \& (d) The plots for problem II with the CLBF obtained by solving Laplace equation (left) and Poisson's equation (right) respectively.}
\end{figure*}

Although in theory, the solution should take values less than 1.0 outside the unsafe region, the distinction is highly unclear in the numerical solution. To clarify the distinction, we replace the Laplace equation with the Poisson equation $\nabla^2V=-6$. We plotted the solution in 2D view in Fig. \ref{fig:1_6_2d}.
This solution shows a better distinction between safe and unsafe regions, but it shows more points where $V(x)$ has a local optimum. Local optima are undesirable as $\langle f(x,u),\nabla V(x)\rangle=0$ in those points so $V(x)$ is unlikely to show a significant decrease.
\begin{table}[ht]
    \centering
    \begin{tabular}{c|c|c||c|c|c|c}
        System & $\nabla^2V$ & $\sigma$ & $\mu_T$ & $\sigma_T$ & unsafe & no reach\\
        \hline
        Roomba & 0 & 0 & 97.278 & 65.607 & 0 & 105 \\
        Roomba & -6 & 0 & 58.224 & 50.388 & 0 & 459\\
        \textbf{Roomba} & \textbf{0} & \textbf{0.1} & \textbf{114.706} & \textbf{81.527} & \textbf{0} & \textbf{0} \\
        Roomba & -6 & 0.1 & 139.606 & 180.247 & 0 & 185\\
        DiffDrive & 0 & 0 & {301.403} & {216.495} & {0} & {95}\\
        DiffDrive & -6 & 0 & 127.045 & 115.348 & 0 & 492\\
        \textbf{DiffDrive} & \textbf{0} & \textbf{0.1} & \textbf{239.759} & \textbf{220.689} & \textbf{0} & \textbf{2}\\
        DiffDrive & -6 & 0.1 & 158.837 & 167.421 & 0 & 5\\
        CarRobot & 0 & 0 & 452.125 & 323.942 & 0 & 193 \\
        \textbf{CarRobot} & \textbf{-6} & \textbf{0} & \textbf{277.398} & \textbf{253.148} & \textbf{0} & \textbf{47} \\
        CarRobot & 0 & 0.1 & 21.200 & 69.889 & 0 & 930 \\
        CarRobot & -6 & 0.1 & 26.887 & 88.174 & 0 & 920 \\
    \end{tabular}
    \caption{Results for Problem I. Each system was run with 1,000 different randomly initialized initial conditions under harmonic and superharmonic CLBFs with deterministic and stochastic controllers.}
    \label{tab:CLBF1}
\end{table}

The numerical results are listed in Table \ref{tab:CLBF1}. Roomba achieves the best results under a harmonic CLBF with a stochastic control policy as all the trajectories converged to $\mathcal{S}_{goal}$. DiffDrive achieves the best results under a harmonic CLBF with a stochastic control policy with a $99.8\%$ safety rate and similar results with a superharmonic CLBF with a stochastic control policy with a $99.5\%$ safety rate. CarRobot achieves the best results under a superharmonic CLBF with a deterministic policy with a $95.3\%$ safety rate.
\subsection{Problem II}
In this problem, we explore an environment that contains a goal region near the origin and two unsafe regions in the interior of the domain. In a similar vein to problem I, the three models are tested with this reach-avoid task. 
\begin{align}
    S&=[-1,1]\times[-1,1]\\
    \mathcal{S}_{goal}&=[-0.1,0.1]\times[-0.1,0.1]\\
    \mathcal{S}_{unsafe}&=\partial S\cup C_5\cup C_6,
\end{align}
with the subdomains of the unsafe region given by
\begin{align}
    C_5&=[-0.5,-0.3]\times[-0.5,0.5]\\
    C_6&=[0.3,0.5]\times[-0.5,0.5].
\end{align}
Initial conditions are sampled as:
\begin{align}
    x(0)&\sim U[-0.9,-0.6]\cup[0.6,0.9]\\
    y(0)&\sim U[-0.3,0.3]\\
    \theta(0)&\sim U[0,2\pi].
\end{align}
The numerical solution to the harmonic CLBF for this problem is plotted in 2D view in Fig. \ref{fig:2_0_2d}.


To make the distinction between safe and unsafe regions clearer, we plotted the solution to the Poisson equation $\nabla^2V=-6$ in 2D view in Fig. \ref{fig:2_6_2d}.
\begin{table}[ht]
    \centering
    \begin{tabular}{c|c|c||c|c|c|c}
        System & $\nabla^2V$ & $\sigma$ & $\mu_T$ & $\sigma_T$ & unsafe & no reach\\
        \hline
        Roomba & 0 & 0 & 85.600 & 54.603 & 12 & 0 \\
        \textbf{Roomba} & \textbf{-6} & \textbf{0} & \textbf{202.568} & \textbf{199.603} & \textbf{0} & \textbf{91}\\
        Roomba & 0 & 0.1 & 86.846 & 55.789 & 9 & 78 \\
        Roomba & -6 & 0.1 & 48.033 & 21.363 & 0 & 880\\
        \textbf{DiffDrive} & \textbf{0} & \textbf{0} & \textbf{258.502} & \textbf{168.726} & \textbf{0} & \textbf{55}\\
        DiffDrive & -6 & 0 & 96.077 & 28.211 & 0 & 730\\
        DiffDrive & 0 & 0.1 & 265.348 & 164.137 & 7 & 9\\
        DiffDrive & -6 & 0.1 & 209.452 & 171.338 & 0 & 118\\
        \textbf{CarRobot} & \textbf{0} & \textbf{0} & \textbf{380.014} & \textbf{230.751} & \textbf{30} & \textbf{124} \\
        CarRobot & -6 & 0 & 415.246 & 257.179 & 0 & 570 \\
        CarRobot & 0 & 0.1 & 264.435 & 205.930 & 45 & 942 \\
        CarRobot & -6 & 0.1 & 342.060 & 262.481 & 0 & 987 \\
    \end{tabular}
    \caption{Results for Problem II. Each system was run with 1,000 different randomly initialized initial conditions under harmonic and superharmonic CLBFs with deterministic and stochastic controllers.}
    \label{tab:CLBF2}
\end{table}
The numerical results are posted in Table \ref{tab:CLBF2}. Roomba achieves the best results under a superharmonic CLBF with a deterministic control policy with a $90.9\%$ safety rate. DiffDrive achieves the best results under a harmonic CLBF with a deterministic control policy with a $94.5\%$ safety rate. And CarRobot achieves the best results under a harmonic CLBF with a deterministic policy with a $84.6\%$ safety rate, although it has 30 trajectories converging to $\mathcal{S}_{unsafe}$ and 124 trajectories not converging to $\mathcal{S}_{goal}$. Furthermore, this table shows that deterministic control policies achieve better results than stochastic control policies and have significantly more trajectories that converge to $\mathcal{S}_{goal}$. 

Tables \ref{tab:CLBF1} and \ref{tab:CLBF2} show that deterministic policies under superharmonic CLBFs always avoid $\mathcal{S}_{unsafe}$ as expected since the Poisson equation makes the distinction between safe and unsafe regions more evident. But it comes with the drawback that there is a high risk that the trajectory will converge to a local minimum and not reach $\mathcal{S}_{goal}$. Deterministic policies generally outperform stochastic policies as the noise added to control inputs comes with the risk that it drives the trajectory away from the goal or towards an unsafe region. Despite using control inputs that solve the minimization problem in Equation~\eqref{eq:sup_inf}, in many episodes, the trajectory does not converge to $\mathcal{S}_{goal}$. This is likely due to numerical errors associated with estimating $\nabla V$ or with solving the system dynamics.

\subsection{2D Quadrotor}
In this problem, we control a quadrotor to land at a safe ground-level region while avoiding two obstacles. The dynamics of the quadrotor are in Appendix \ref{app:quad2d}. We are running the same problem as in \cite{du2023reinforcement} and the horizontally flipped problem in \cite{dawson2021safe}. The safe and unsafe subsets are given as the following.
\begin{align}
    S&=[-1,2]\times [0,2]\\
    \mathcal{S}_{goal}&=[-1,0.5]\times[0.25,0.75]\\
    \mathcal{S}_{unsafe}&=\partial S\cup C_7\cup C_8\cup C_9,
\end{align}
with the subdomains of the unsafe region given by
\begin{align}
    C_7&=[-1,0]\times[1.25,2]\\
    C_8&=[-1,2]\times [0,0.25]\\
    C_9&=[0.5,1]\times [0,1].
\end{align}
Initial conditions are sampled as:
\begin{align}
    x(0)&\sim U[1.4,1.6]\\
    z(0)&\sim U[0.3,1.5]\\
    x'(0),z'(0),\theta(0),\theta'(0)&\sim U[-0.05,0.05].
\end{align}
The numerical solution to the Harmonic CLBF for this problem is plotted in Fig. \ref{fig:quad2d}.

\begin{figure}[ht]
    \centering
    \includegraphics[width=0.8\linewidth,trim={0 0 0 5cm},clip]{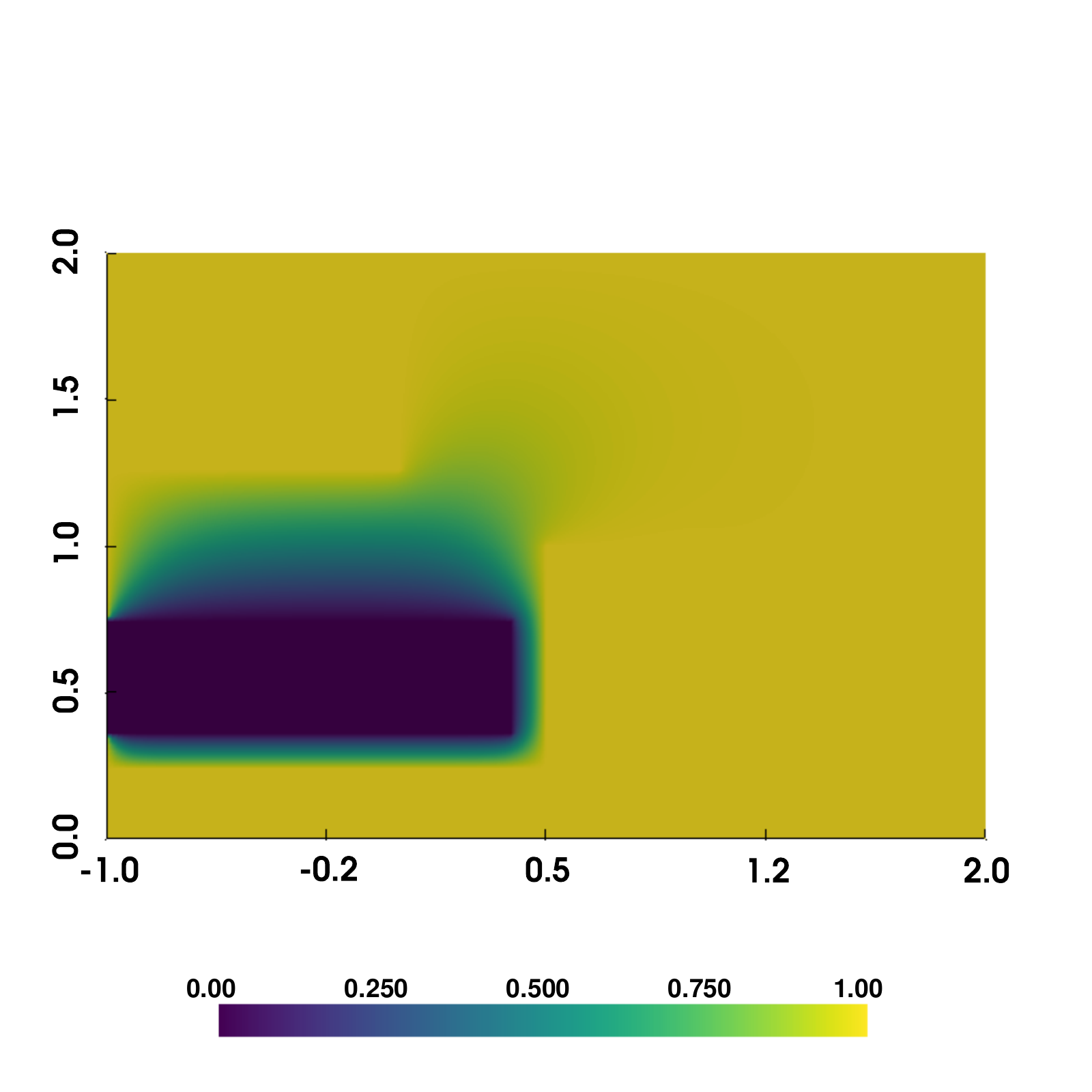}
    \caption{Countour plot of the harmonic CLBF for the 2D Quadrotor environment}
    \label{fig:quad2d}
\end{figure}

In this case, we use a PID controller $F(\x),M(\x)$, where $\x=(x,\dot x,z,\dot z,\theta,\dot\theta)$.
\begin{align}
    \phi_c&=\frac{1}{g}(\dot x-\tilde{V}_x(x,z))\\
    F&= m(g + \tilde{V}_z(x,z) - \dot z)\\
    M&= Ixx(-15\dot\theta + 18(\phi_c - z)),
\end{align}
where $\tilde{V}_x(x,y)=\frac{V_x(x,z)}{\sqrt{V_x(x,z)^2+V_z(x,z)^2}},\tilde{V}_z(x,z)=\frac{V_x(x,z)}{\sqrt{V_x(x,z)^2+V_z(x,z)^2}}$. This normalization is important because the scalar elements of $\nabla V$ take very small values near the initial condition.

We numerically solved the ode $\dot\x=f(\x, F(\x), M(\x))$ for a horizon of 2000 time steps using RK45 and repeated this for 100,000 different initial conditions. For $100\%$ of the initial conditions, the quadrotor converges to the goal region while avoiding the unsafe regions. It reaches the safe region at a mean time of $t=23.221$ seconds and a standard deviation of $1.765$.

The safety rate of the controller is higher than the safety rates for rCLBF-QP ($83\%$) and Robust-MPC ($53\%$) reported in \cite{dawson2021safe}. 

In Fig. \ref{fig:quad2d_traj}, we overlayed the trajectories of the quadrotor on 12 different initial conditions on a superharmonic CLBF plot with $\nabla^2V=-6$ that we computed only for presentation to make the distinction between safe and unsafe regions clearer. 
The trajectories in the plot show that the quadrotor can effectively avoid unsafe regions and enter the goal regions. 
%
\begin{figure}[ht]
    \centering
    \includegraphics[width=0.8\linewidth,trim={0 0 0 5cm},clip]{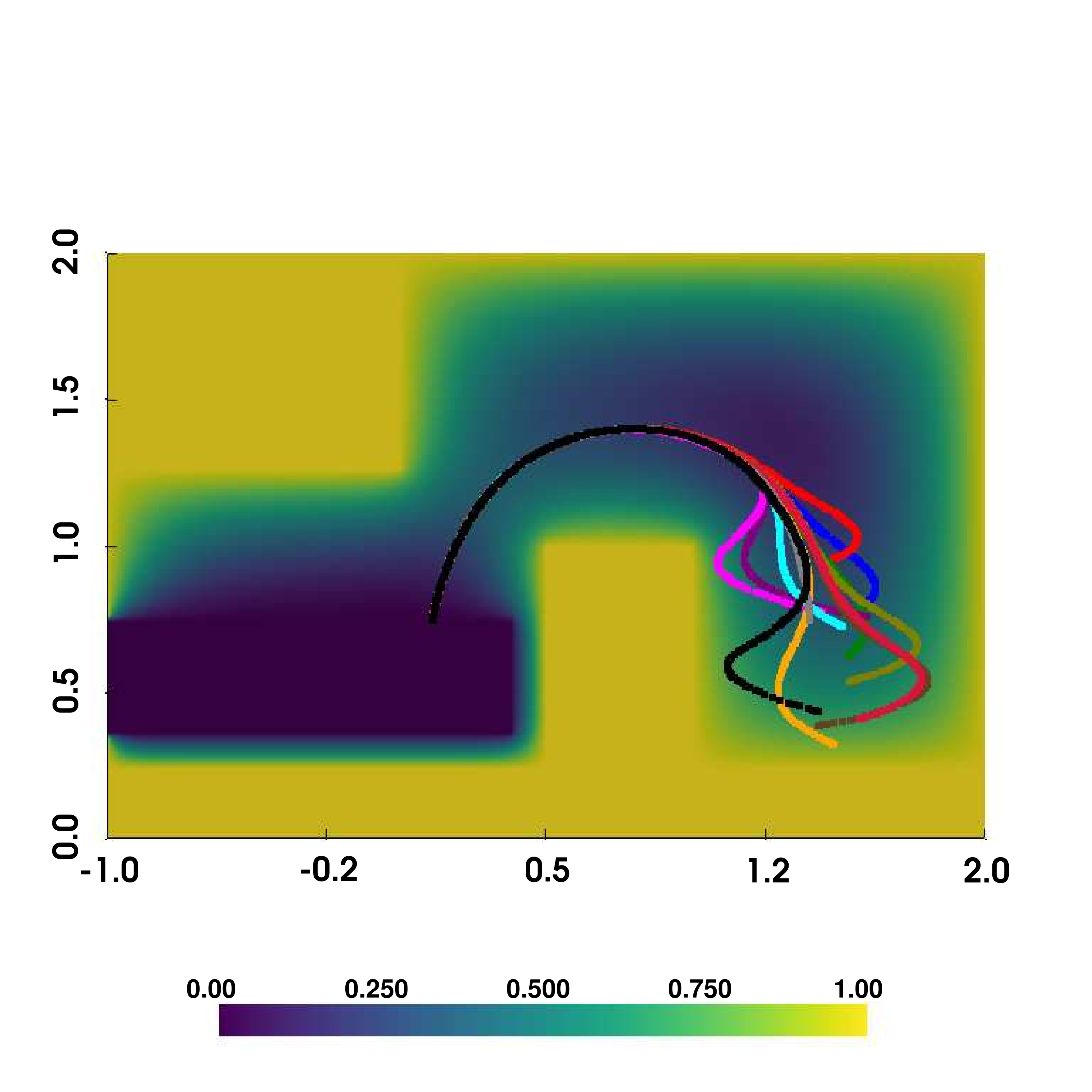}
    \caption{Plot of trajectories on the 2D Quadrotor environment with 12 different initial conditions.}
    \label{fig:quad2d_traj}
\end{figure}

\section{Conclusions}

In this paper, we introduced harmonic CLBFs that exploit the maximum principle that harmonic functions satisfy to encode the properties of CLBFs.
This paper is the first to unify harmonic functions with CLBFs for an application to control theory.
We select control inputs that maximize the inner product of the system dynamics with the steepest descent direction of the harmonic CLBF.
This has been applied to reach-avoid problems and demonstrated a low risk of entering unsafe regions while converging to the goal region. 




\section*{Appendix I: Dynamics of each system}

\subsection{Roomba}

The dynamics of the Roomba are as follows:
\begin{align} \nonumber
    \dot{x}=v\cos\theta, \quad \dot{y}=v\sin\theta, \quad \dot\theta=\omega,
\end{align}
where $v\in[-1,1]$ and $\omega\in[-1,1]$ are the control inputs.

The minimization problem to Equation \eqref{eq:sup_inf} is:
\begin{equation}\nonumber
    \inf_{v\in[-1,1],\omega\in[-1,1]}(v\cos\theta,v\sin\theta)(V_x(x,y),V_y(x,y))^T.
\end{equation}
The infimum is reached when $v$ maximizes the negative of the magnitude of the inner product, and $w$ is set to the direction that maximizes the magnitude of the inner product, which yields
\begin{align}
    v&=-\text{sign}(V_x(x,y)\cos\theta+V_y(x,y)\sin\theta), \nonumber\\
    w&=\text{sign}(-V_x(x,y)\cos\theta+V_y(x,y)\sin\theta). \nonumber
\end{align}
These control input pairs are used in computing the results in Tables \ref{tab:CLBF1} and \ref{tab:CLBF2}.

\subsection{DiffDrive}

The dynamics of the diff-drive robot are as follows:
\begin{align} \nonumber
    \dot x&=(u_L+u_R)\frac{r}{2}\cos\theta,
    \dot y=(u_L+u_R)\frac{r}{2}\sin\theta,\nonumber\\
    \dot\theta&=(u_R-u_L)\frac{r}{2d}, \nonumber
\end{align}
where $u_L\in[-1,1]$ and $u_R\in[-1,1]$ are the control inputs subject to the constraint $|u_L|+|u_R|\leq 1$. $r=0.1$ and $d=0.1$.

The minimization problem to Equation~\eqref{eq:sup_inf} is:
\begin{align} \nonumber
    \inf_{|u_L|+|u_R|\leq 1}&((u_L+u_R)\frac{r}{2}\cos\theta,(u_L+u_R)\frac{r}{2}\sin\theta)\nabla V(x,y).
\end{align}
The infimum is reached when $u_L+u_R$ maximizes the negative of the magnitude of the inner product, and $u_R-u_L$ is set to the direction that maximizes the magnitude of the inner product, which yields
\begin{align}
    u_R=[&\text{sign}(-V_x(x,y)\cos\theta+V_y(x,y)\sin\theta)\nonumber\\&-\text{sign}(V_x(x,y)\cos\theta+V_y(x,y)\sin\theta)]/2, \nonumber\\
    u_L=[&-\text{sign}(-V_x(x,y)\cos\theta+V_y(x,y)\sin\theta)\nonumber\\&-\text{sign}(V_x(x,y)\cos\theta+V_y(x,y)\sin\theta)]/2. \nonumber
\end{align}

\subsection{CarRobot}

The dynamics of the car-like robot are as follows:
\begin{align}\nonumber 
    \dot{x}=v\cos\theta, \quad
    \dot{y}=v\sin\theta, \quad
    \dot\theta=v\tan(\psi)/l, \quad
    \dot\psi=w,
\end{align}
where $v\in[-1,1]$ and $w\in[-1,1]$ are the control inputs subject to the constraint $|v|\leq |w|$. $l=0.1$. $\psi(0)$ is initialized to $0$. We set $l=0.1$.

The minimization problem to Equation \eqref{eq:sup_inf} is:
\begin{equation} \nonumber
    \inf_{|v|\leq|w|\leq 1}(v\cos\theta,v\sin\theta)(V_x(x,y),V_y(x,y))^T.
\end{equation}
The infimum is reached when $v$ maximizes the negative of the magnitude of the inner product, and $\theta$ is set to the direction that maximizes the magnitude of the inner product.
\begin{align} 
    v&=-\text{sign}(V_x(x,y)\cos\theta+V_y(x,y)\sin\theta), \nonumber\\
    w&=v\text{sign}(-V_x(x,y)\cos\theta+V_y(x,y)\sin\theta)-\text{sign}(\psi). \nonumber
\end{align}

\subsection{2D Quadrotor}
\label{app:quad2d}

We use the implementation of the 2D quadrotor environment by \cite{9849119}. The dynamics of the quadrotor are as follows:
\begin{align}
    \ddot{x}&=-F\sin\theta/m \nonumber\\
    \ddot{z}&=F\cos\theta/m-g \nonumber\\
    \ddot{\theta}&=M/I_{xx}, \nonumber
\end{align}
where $F,M\in [g/2-4,g/2+4]$ are the control inputs. $m=0.033$, $I_{xx}=2.31e-05$, $g=9.81$ and $d=0.046/\sqrt{2}$ are constants.






\section*{ACKNOWLEDGMENT}

The authors would like to thank Professor Yash Pant from the Department of Electrical and Computer Engineering, University of Waterloo for providing us with feedback for this paper.

\bibliography{root}
\bibliographystyle{plain}

\end{document}